\renewcommand{\implies}{\Rightarrow}
\newcommand{\too}{\longrightarrow}
\newcommand{\ox}{\otimes}
\newcommand{\ch}{\mathrm{char}}
\newcommand{\<}{\langle}
\renewcommand{\>}{\rangle}
\DeclareMathOperator*{\lperp}{\Huge \textrm {$\perp$}}
\newcommand{\N}{\mathbb{N}}
\newcommand{\vf}{\varphi}
\newcommand{\gL}{\Lambda}
\newcommand{\Kd}{{K^{\times}}}
\newcommand{\HP}{\mathbb{H}}
\newcommand{\nslo}{\frac{n^2-1}{2}}
\newcommand{\nslfo}{\frac{n^2-4}{2}}
\newcommand{\kpow}{\gL^k}
\newcommand{\hyp}{\mathrm {Hyp}}
\newcommand{\lb}{\left(}
\newcommand{\rb}{\right)}
\newtheorem{lemma}{Lemma}[section]
\newtheorem{propo}[lemma]{Proposition}
\theoremstyle{definition}
\newtheorem{defi}[lemma]{Definition}
\newtheorem{remark}[lemma]{Remark}
\DeclareFontFamily{OT1}{manual}{}
\DeclareFontShape{OT1}{manual}{m}{n}{ <10> manfnt }{}
\title{Symmetric Powers of Hyperbolic Forms \\ \ and of Trace Forms on Symbol Algebras}
\author{Ronan Flatley}
\date{\today}             
\address{Department of Mathematics and Computer Studies, Mary Immaculate College, Limerick, Ireland}
\email{ronan.flatley@mic.ul.ie}
\subjclass[2010]{Primary: 11E81; Secondary: 16K20}
\keywords{hyperbolic form, symmetric power, symbol algebra, trace form}
\begin{document}
\maketitle

\begin{abstract} Let $K$ be a field with characteristic different from $2$ and let $S$ be a symbol algebra over $K$. We compute the symmetric powers of hyperbolic quadratic forms over $K$. Also, we compute the symmetric powers of the quadratic trace form of $S$. In both cases we apply a generalised form of the Vandermonde convolution in the course of the computations. 
 \end{abstract}

\section{Introduction}\label{sec.int}

We assume throughout this paper that $K$ is a field with characteristic different from $2$. In \cite{F} we computed the exterior powers of hyperbolic forms over $K$ and used these results to compute the exterior powers of the quadratic trace form of symbol algebras over $K$. Here we are interested in finding corresponding results for the symmetric powers of such forms. 

In \cite{M2}, McGarraghy investigated symmetric powers of classes of symmetric bilinear forms in the Witt-Grothendieck ring of $K$, deriving their basic properties as well as computing several of their classical invariants such as determinant, signature with respect to an ordering and Hasse invariant. Their relation to exterior powers was also given. For a given symmetric bilinear form $\varphi$ over $K$ there was a distinction drawn between factorial symmetric powers ${\bf S}^k\varphi$  and non-factorial symmetric powers $S^k\varphi$. Factorial symmetric powers were seen to have the advantages of being computable for any given symmetric bilinear form and of being independent of basis. However, effectively they are useful only in characteristic $0$. On the other hand, non-factorial symmetric powers are independent of characteristic but require the form to be diagonalised, i.e., a basis needs to be chosen. In this paper we deal with non-factorial symmetric powers of forms, hereafter referred to simply as symmetric powers. Since $\ch{(K)}\neq 2$, we use the one-to-one correspondence between symmetric bilinear forms and quadratic forms.

Notation and terminology is borrowed from Lam \cite{TYL} and Scharlau \cite{S}. A diagonalised quadratic form over $K$ with coefficients $a_1, \dots, a_m \in \Kd$ is denoted by $\<a_1, \dots, a_m\>$. The hyperbolic plane $\<1, -1\>$ is denoted by $\HP$.
If $\vf$ and $\psi$ are forms over $K$ then $\vf \simeq \psi$ means that these forms are isometric. The tensor product of the $1$-dimensional form $\<n\>$ with $\vf$ is denoted $\<n\>\vf$ to be distinguished from $n$ copies of $\vf$ which is written as $n \times \vf$.

\section{Preliminaries}\label{sec.prelim}

\subsection{Exterior powers}
Bourbaki defined the concept of exterior power of a symmetric bilinear form in \cite[IX, \S1, (37)]{B}:

\begin{defi}\label{D3}
Let $V$ be a vector space of dimension $m$ over $K$.
Let $\vf: V \times V \rightarrow K$ be a non-singular bilinear form and let $k$ be a positive integer, $k\le m$. We define the {\it $k$-fold exterior power} of $\vf$, 
$$\kpow \vf: \kpow V \times \kpow V \too K$$
by 
$$\kpow \vf (x_1 \wedge \dots \wedge x_k, y_1 \wedge \dots \wedge y_k) = \det\big(\vf(x_i, y_j)\big)_{1 \le i, j \le k}.$$
The bilinear extension of this form defines $\kpow \vf$ everywhere on $\kpow V \times \kpow V$. 
We define $ \gL^0\vf := \<1\>$. Also, for $k>m$, we define  $\gL^k\vf := 0 $, the zero form.
\end{defi}

We use the following two results on exterior powers of hyperbolic forms:

\begin{propo}\cite[Proposition 5.6]{F}\label{S4}
Let $\phi \simeq h \times \HP$ where $ h \in \mathbb{N}$ and $ k$ odd with $1 \leq k \leq 2h-1$. Then
\[
\gL^k\phi \simeq \frac{1}{2} {2h \choose k} \times\HP.
\]
\end{propo}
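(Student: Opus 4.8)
The plan is to reduce the statement to a sign count on a diagonalised form, the engine being the behaviour of $\gL^k$ on diagonal forms together with a Vandermonde-type convolution. First I would diagonalise: since $\HP=\<1,-1\>$,
\[
\phi \simeq h\times\HP \simeq \<\underbrace{1,\dots,1}_{h},\,\underbrace{-1,\dots,-1}_{h}\>,
\]
and I fix an orthogonal basis $e_1,\dots,e_{2h}$ of the underlying space with $\phi(e_i,e_i)=1$ for $i\le h$ and $\phi(e_i,e_i)=-1$ for $i>h$. For a $k$-element set $I=\{i_1<\dots<i_k\}$ put $e_I=e_{i_1}\wedge\dots\wedge e_{i_k}$. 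Reading off Definition \ref{D3}, $\gL^k\phi(e_I,e_J)=\det\big(\phi(e_{i_a},e_{j_b})\big)$ vanishes unless $I=J$ and equals $\prod_{i\in I}\phi(e_i,e_i)$ when $I=J$; hence $\{e_I\}_{|I|=k}$ is an orthogonal basis and
\[
\gL^k\phi \;\simeq\; \underset{|I|=k}{\perp}\ \<{\textstyle\prod_{i\in I}}\phi(e_i,e_i)\>.
\]
Each diagonal entry equals $(-1)^j$, where $j=|I\cap\{h+1,\dots,2h\}|$, and exactly $\binom{h}{j}\binom{h}{k-j}$ of the sets $I$ produce a given $j$.

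Next I would count signs. Writing $p$ and $q$ for the number of $+1$ and $-1$ entries respectively, the total dimension gives $p+q=\binom{2h}{k}$, while
\[
p-q \;=\; \sum_{j=0}^{k}(-1)^j\binom{h}{j}\binom{h}{k-j}.
\]
This is precisely the convolution flagged in the abstract: it is the coefficient of $x^k$ in $(1-x)^h(1+x)^h=(1-x^2)^h$. Because $(1-x^2)^h$ is a polynomial in $x^2$, that coefficient is $0$ for every odd $k$, so the hypothesis that $k$ is odd forces $p=q$.

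With $p=q$ the form $\gL^k\phi$ is diagonal with equally many entries $+1$ and $-1$; pairing them off via $\<1,-1\>\simeq\HP$ yields $\gL^k\phi\simeq p\times\HP$ with $p=\tfrac12\binom{2h}{k}$, which is the assertion (the evenness of $\binom{2h}{k}$ for odd $k$ is not assumed but falls out of $p=q$). The constraint $1\le k\le 2h-1$ plays no role beyond keeping the conclusion nonvacuous, excluding $\gL^0\phi=\<1\>$, the one-dimensional form at $k=2h$, and the zero form for $k>2h$. I expect the one genuinely delicate point to be justifying the diagonal exterior-power formula carefully from Definition \ref{D3}: one must verify that the wedges $e_I$ for distinct $I$ are mutually orthogonal, so that the Gram matrix of $\gL^k\phi$ in the basis $\{e_I\}$ is actually diagonal rather than merely block-structured. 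Once that orthogonality is secured, the remainder is bookkeeping together with the single generating-function identity, both routine.
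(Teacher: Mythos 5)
Your argument is correct and complete: the observation that the wedges $e_I$ for distinct $k$-subsets $I$ are orthogonal (a Gram matrix with $I\neq J$ has a zero row, so its determinant vanishes) reduces everything to the sign count, and the generating-function identity $(1-x)^h(1+x)^h=(1-x^2)^h$ correctly forces $p=q$ for odd $k$. Note that the present paper does not prove this proposition at all --- it is quoted from \cite[Proposition 5.6]{F} --- but your diagonalise-and-count argument via the signed Vandermonde convolution is the standard route and matches the method the paper advertises (and uses in Lemma \ref{L2}) for its other computations.
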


\begin{propo}\cite[Proposition 5.7]{F}\label{S5}
Let $\phi \simeq h \times \HP$ where $ h \in \mathbb{N}$, $k=2\ell$ and $0 \le \ell \le h$. Then 
\[
\kpow\phi = \gL^{2\ell}\phi \simeq  {h \choose \ell} \times \<(-1)^\ell\> \perp \frac{1}{2} \lb {2h \choose 2\ell}-{h \choose \ell}\rb \times\HP.
\]
\end{propo}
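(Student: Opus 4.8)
The plan is to reduce everything to a diagonalised computation and then apply a signed Vandermonde identity. Since $\HP = \<1,-1\>$, the form $\phi \simeq h \times \HP$ is isometric to the diagonal form in which the coefficient $1$ occurs $h$ times and $-1$ occurs $h$ times. Working from Definition~\ref{D3}, I would first record the exterior power of a diagonal form: if $\psi \simeq \<a_1, \dots, a_m\>$ is diagonalised in a basis $e_1, \dots, e_m$, then the vectors $e_{i_1} \wedge \dots \wedge e_{i_k}$ with $i_1 < \dots < i_k$ form a basis of $\gL^k V$ that is orthogonal with respect to $\kpow\psi$, because $\det\big(\psi(e_{i_p}, e_{j_q})\big)_{1\le p,q\le k}$ vanishes unless the two index sets coincide and otherwise equals $a_{i_1}\cdots a_{i_k}$. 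Hence $\kpow\psi \simeq \lperp_{i_1 < \dots < i_k} \<a_{i_1}\cdots a_{i_k}\>$, the sum running over all $k$-element subsets.

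Applying this to $\phi$ with $k = 2\ell$, each $2\ell$-element subset of the $2h$ coordinates contributes a $1$-dimensional form with coefficient $(-1)^j$, where $j$ is the number of chosen coordinates carrying $-1$. The number of subsets containing exactly $j$ of the $-1$'s (hence $2\ell-j$ of the $+1$'s) is $\binom{h}{j}\binom{h}{2\ell-j}$. Writing $N_+$ for the number of subsets with $j$ even and $N_-$ for those with $j$ odd, I obtain $\kpow\phi \simeq N_+ \times \<1\> \perp N_- \times \<-1\>$.

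The two quantities I need are $N_+ + N_-$ and $N_+ - N_-$. The sum is $\sum_j \binom{h}{j}\binom{h}{2\ell-j} = \binom{2h}{2\ell}$, the ordinary Vandermonde convolution. The difference is the signed sum $\sum_j (-1)^j \binom{h}{j}\binom{h}{2\ell-j}$, which is precisely the coefficient of $x^{2\ell}$ in $(1+x)^h(1-x)^h = (1-x^2)^h$, and therefore equals $(-1)^\ell \binom{h}{\ell}$. This signed Vandermonde identity is the computational heart of the argument and is the ``generalised Vandermonde convolution'' advertised in the abstract; I expect verifying and invoking it cleanly to be the only genuine obstacle, the rest being sign bookkeeping.

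Finally I would feed these two values back in. Since $\<1,-1\> \simeq \HP$, cancelling $\min(N_+, N_-)$ opposed pairs leaves $|N_+ - N_-| = \binom{h}{\ell}$ copies of a single coefficient together with $\min(N_+, N_-) = \frac{1}{2}\big(\binom{2h}{2\ell} - \binom{h}{\ell}\big)$ hyperbolic planes. Because $N_+ - N_- = (-1)^\ell\binom{h}{\ell}$, the surviving anisotropic part consists of $\<1\>$'s when $\ell$ is even and $\<-1\>$'s when $\ell$ is odd; in both cases it is $\binom{h}{\ell} \times \<(-1)^\ell\>$, which gives the claimed isometry.
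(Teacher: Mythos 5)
Your argument is correct. Note that the paper you are working from does not actually prove this statement: it imports it as \cite[Proposition 5.7]{F}, so there is no in-paper proof to measure yours against. Judged on its own terms, every step checks out. The diagonalisation of $\gL^k\psi$ by the wedges $e_{i_1}\wedge\dots\wedge e_{i_k}$ with coefficients $a_{i_1}\cdots a_{i_k}$ is the standard consequence of Definition~\ref{D3} (off-diagonal Gram determinants vanish because some row is zero), the count $\binom{h}{j}\binom{h}{2\ell-j}$ of $2\ell$-subsets containing exactly $j$ of the $-1$-coordinates is right, and the two convolutions $N_++N_-=\binom{2h}{2\ell}$ and $N_+-N_-=\sum_j(-1)^j\binom{h}{j}\binom{h}{2\ell-j}=(-1)^\ell\binom{h}{\ell}$ (coefficient of $x^{2\ell}$ in $(1-x^2)^h$) give exactly the stated decomposition after pairing off $\min(N_+,N_-)$ copies of $\langle 1,-1\rangle\simeq\HP$. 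The signed generating-function identity you use is the natural companion of the signed Vandermonde convolution (Lemma~\ref{L2}) that this paper deploys for the symmetric powers, so your route is entirely in the spirit of the surrounding computations; it also handles the boundary cases $\ell=0$ and $\ell=h$ without special pleading. One small presentational point: when you pass from $N_+\times\langle 1\rangle\perp N_-\times\langle -1\rangle$ to the final form, it is worth saying explicitly that the leftover $\binom{h}{\ell}$ copies carry the sign of $N_+-N_-$, i.e.\ $\langle 1\rangle$ for $\ell$ even and $\langle -1\rangle$ for $\ell$ odd, which is precisely $\langle(-1)^\ell\rangle$ --- you do say this, and it closes the argument.
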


\subsection{Symmetric powers}

\begin{defi}\cite[adapted statement of Proposition 3.5]{M2}\label{S1}
Let $V$ be a vector space of dimension $m$ over $K$. Let $\vf$ be a symmetric bilinear form over $K$ with $\vf \simeq \<a_1, \dots , a_m\>$. Let $k$ be a non-negative integer. We define the \emph{$k$-fold symmetric power} of $\varphi$,
\[
 S^k\varphi : S^k V \times S^k V \to K
\]
by
\[
 S^k \vf = {\lperp_{\substack{1 \le i_1 < \dots < i_\ell \le m\\ k_{i_1}+ \dots +  k_{i_\ell} =k}}}\<a_{i_1}^{k_{i_1}} \dots a_{i_\ell}^{k_{i_\ell}}\>.
\]
\end{defi}

\begin{remark}
The form $S^k\varphi$ has dimension ${m+k-1}\choose{k}$.
\end{remark}

\begin{remark}
Clearly, $S^0\varphi=\<1\>$ and $S^1\varphi=\varphi$.
\end{remark}

\begin{remark}
We note that $S^k$ preserves isometries, i.e., $\varphi\simeq\psi\implies S^k\varphi\simeq S^k\psi$.
In general the converse is not true; consider a field where $\<1\>\not\simeq\<-1\>$.
Let $\varphi=\<1,1\>\perp\HP$ and $\psi=\<-1,-1\>\perp\HP$. 
Then $\varphi\not\simeq\psi$ but $S^2\varphi\simeq S^2\psi=4\times\<1\>\perp 3\times\HP$.
\end{remark}

\begin{remark}
The $k$-fold exterior power is always a subform of the $k$-fold symmetric power.
Therefore, $\gL^k\varphi$ isotropic $\implies S^k\varphi$ isotropic.
\end{remark}

\begin{lemma}\label{L1}
$S^k (m \times \<1\>) ={{m+k-1} \choose k} \times \<1\>$ and
$S^k (m \times \<-1\>) ={{m+k-1} \choose k} \times \<(-1)^k\>$.
\end{lemma}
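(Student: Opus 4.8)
The plan is to apply Definition \ref{S1} directly, since both identities fall out almost immediately from the combinatorial description of $S^k$. First I would write $m \times \<1\> \simeq \<a_1, \dots, a_m\>$ with $a_1 = \dots = a_m = 1$, and expand
\[
S^k(m \times \<1\>) = {\lperp_{\substack{1 \le i_1 < \dots < i_\ell \le m\\ k_{i_1}+ \dots +  k_{i_\ell} =k}}}\<a_{i_1}^{k_{i_1}} \dots a_{i_\ell}^{k_{i_\ell}}\>.
\]
Since every coefficient equals $1$, each one-dimensional summand reduces to $\<1^{k_{i_1}} \dots 1^{k_{i_\ell}}\> = \<1\>$. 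Hence $S^k(m \times \<1\>)$ is an orthogonal sum of copies of $\<1\>$, and the number of summands is precisely $\dim S^k(m \times \<1\>)$, which by the remark following Definition \ref{S1} equals ${m+k-1 \choose k}$. This gives the first identity.

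For the second identity I would argue in exactly the same way, now with $a_1 = \dots = a_m = -1$. Here each summand becomes $\<(-1)^{k_{i_1}} \dots (-1)^{k_{i_\ell}}\> = \<(-1)^{k_{i_1} + \dots + k_{i_\ell}}\>$, and the indexing constraint $k_{i_1} + \dots + k_{i_\ell} = k$ forces every such exponent to be $k$. Thus each summand equals $\<(-1)^k\>$, and counting summands as before yields $S^k(m \times \<-1\>) \simeq {m+k-1 \choose k} \times \<(-1)^k\>$.

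There is essentially no obstacle here: both statements amount to reading off the constant value of each diagonal entry in the definition and then invoking the known dimension count. The only point needing any justification is the number of terms in the orthogonal sum, namely that the monomials of degree $k$ in $m$ variables number ${m+k-1 \choose k}$; but this is exactly the dimension of $S^k$ already recorded in the remark after Definition \ref{S1}, so it may be cited rather than reproved. Consequently the whole argument is a short, direct consequence of the definition, and I would keep the write-up to the few lines above.
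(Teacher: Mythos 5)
Your argument is correct and is exactly the intended one: the paper states Lemma \ref{L1} without proof precisely because it follows immediately from Definition \ref{S1} in the way you describe. Reading off the constant diagonal entries and citing the dimension count $\binom{m+k-1}{k}$ from the remark after the definition is all that is needed.
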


The next two propositions give properties that are very useful for the computations in Sections \ref{sec.hp} and \ref{sec.sa}.

\begin{propo}\cite[p. 47]{K}\label{S2}
Let $\varphi$ and $\psi$ be symmetric bilinear forms over $K$ and let $k$ be a positive integer. Then 
\[
   S^k(\varphi\perp\psi)=\lperp_{i+j=k}S^i\varphi\ox S^j\psi\ .
\]
\end{propo}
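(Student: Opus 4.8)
The plan is to reduce the identity to pure bookkeeping over multi-indices, using the diagonal description of $S^k$ from Definition \ref{S1} together with the two elementary facts that $\<s\>\ox\<t\>\simeq\<st\>$ and that $\ox$ distributes over $\perp$. First I would fix diagonalisations $\varphi\simeq\<a_1,\dots,a_m\>$ and $\psi\simeq\<b_1,\dots,b_n\>$, so that the concatenated basis diagonalises $\varphi\perp\psi\simeq\<a_1,\dots,a_m,b_1,\dots,b_n\>$. Applying Definition \ref{S1} to this $(m+n)$-dimensional form expresses $S^k(\varphi\perp\psi)$ as the orthogonal sum of the one-dimensional forms $\<a^{\mathbf e}b^{\mathbf f}\>$, where $a^{\mathbf e}=a_1^{e_1}\cdots a_m^{e_m}$ and $b^{\mathbf f}=b_1^{f_1}\cdots b_n^{f_n}$, ranging over all pairs $(\mathbf e,\mathbf f)\in\N^m\times\N^n$ with $|\mathbf e|+|\mathbf f|=k$.

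The key step is to reorganise this index set. Every such pair carries a well-defined pair of partial degrees $(i,j)=(|\mathbf e|,|\mathbf f|)$ with $i+j=k$, and conversely each splitting $i+j=k$ singles out exactly those pairs with $|\mathbf e|=i$ and $|\mathbf f|=j$; this partition is the only genuine combinatorial content of the proof. Regrouping the orthogonal sum accordingly gives
\[
  S^k(\varphi\perp\psi)\simeq\lperp_{i+j=k}\Big(\lperp_{\substack{|\mathbf e|=i\\ |\mathbf f|=j}}\<a^{\mathbf e}b^{\mathbf f}\>\Big).
\]

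Finally I would identify each inner block with a tensor product. Since $\<a^{\mathbf e}\>\ox\<b^{\mathbf f}\>\simeq\<a^{\mathbf e}b^{\mathbf f}\>$ and $\ox$ distributes over $\perp$, the inner block equals $\big(\lperp_{|\mathbf e|=i}\<a^{\mathbf e}\>\big)\ox\big(\lperp_{|\mathbf f|=j}\<b^{\mathbf f}\>\big)$, which by Definition \ref{S1} is precisely $S^i\varphi\ox S^j\psi$. Substituting this back yields the asserted formula. The only point needing care is that $S^k$ is defined relative to a diagonalising basis, so the equality must be read with the compatible concatenated basis on $\varphi\perp\psi$; once that choice is fixed the argument is a direct rearrangement, and I expect no serious obstacle beyond keeping the multi-index bookkeeping honest.
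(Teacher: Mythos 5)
Your argument is correct, but note that the paper itself gives no proof of this proposition: it is quoted from Knutson \cite[p.~47]{K} and used as a black box, so there is nothing internal to compare your route against. What you supply is the natural self-contained derivation from Definition \ref{S1}: writing $\varphi\perp\psi\simeq\<a_1,\dots,a_m,b_1,\dots,b_n\>$, expanding $S^k(\varphi\perp\psi)$ as $\lperp_{|\mathbf{e}|+|\mathbf{f}|=k}\<a^{\mathbf{e}}b^{\mathbf{f}}\>$, partitioning the index set by the partial degrees $(i,j)=(|\mathbf{e}|,|\mathbf{f}|)$, and collapsing each block via $\<a^{\mathbf{e}}\>\ox\<b^{\mathbf{f}}\>\simeq\<a^{\mathbf{e}}b^{\mathbf{f}}\>$ and distributivity of $\ox$ over $\perp$; the boundary cases $i=0$ or $j=0$ are consistent with the convention $S^0\varphi=\<1\>$. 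Your closing caveat about the choice of basis is the only point of substance, and it is disposed of by the paper's remark that $S^k$ preserves isometries (so the non-factorial symmetric power is well defined up to isometry independently of the diagonalisation); with that in hand the stated equality should be read as an isometry of forms, and your proof is complete. As a sanity check, the dimensions match via the Vandermonde identity $\binom{m+n+k-1}{k}=\sum_{i+j=k}\binom{m+i-1}{i}\binom{n+j-1}{j}$, which is in keeping with the combinatorial tools used elsewhere in the paper.
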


\begin{propo}\cite[Proposition 4.4]{M2}\label{S3}
Let $\varphi$ be an $n$-dimensional symmetric bilinear form over $K$ and let $k$ be a positive integer. Then 
\[
   S^k \varphi=\lperp_{i=0}^{\lceil{k/2\rceil}} {{n+i-1}\choose{i}}\times \gL^{k-2i}\varphi\ .
\]
\end{propo}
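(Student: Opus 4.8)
The plan is to package all the symmetric and exterior powers into generating functions over the Witt-Grothendieck ring $\wh{W}(K)$ and then exploit the single relation $\<a\>^2=\<a^2\>=\<1\>$, valid for every $a\in\Kd$, which is what makes symmetric and exterior powers of quadratic forms so tightly linked. Working in the formal power series ring $\wh{W}(K)[[t]]$, set
\[
\sigma_t(\vf)=\sum_{k\ge 0}S^k\vf\,t^k,\qquad \lambda_t(\vf)=\sum_{k\ge 0}\gL^k\vf\,t^k.
\]
First I would record that both series are multiplicative under orthogonal sum: $\lambda_t(\vf\perp\psi)=\lambda_t(\vf)\lambda_t(\psi)$ is the classical exterior-power rule, while $\sigma_t(\vf\perp\psi)=\sigma_t(\vf)\sigma_t(\psi)$ is exactly Proposition \ref{S2} repackaged.

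Next I would evaluate both series on a one-dimensional form. Since $\gL^j\<a\>=0$ for $j\ge 2$ (Definition \ref{D3}), we have $\lambda_t(\<a\>)=\<1\>+\<a\>t$; and since Definition \ref{S1} gives $S^j\<a\>=\<a^j\>=\<a\>^j$, we have $\sigma_t(\<a\>)=\sum_{j\ge0}\<a\>^j t^j=(\<1\>-\<a\>t)^{-1}$, which is legitimate because the constant term is the unit $\<1\>$. Diagonalising $\vf\simeq\<a_1,\dots,a_n\>$ and applying multiplicativity then yields the product formulas
\[
\lambda_t(\vf)=\prod_{i=1}^n\lb\<1\>+\<a_i\>t\rb,\qquad \sigma_t(\vf)=\prod_{i=1}^n\lb\<1\>-\<a_i\>t\rb^{-1}.
\]

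The key step is to clear each factor of $\sigma_t$ using $\<a_i\>^2=\<1\>$:
\[
\frac{1}{\<1\>-\<a_i\>t}=\frac{\<1\>+\<a_i\>t}{\<1\>-\<a_i\>^2t^2}=\frac{\<1\>+\<a_i\>t}{1-t^2}.
\]
Taking the product over $i$ collapses the two generating functions into the sharp relation $\sigma_t(\vf)=\lambda_t(\vf)\,(1-t^2)^{-n}$. Expanding $(1-t^2)^{-n}=\sum_{i\ge0}\binom{n+i-1}{i}t^{2i}$ by the negative binomial theorem and reading off the coefficient of $t^k$ on both sides gives
\[
S^k\vf=\sum_{\substack{i,j\ge0\\ 2i+j=k}}\binom{n+i-1}{i}\gL^{j}\vf=\lperp_{i=0}^{\lfloor k/2\rfloor}\binom{n+i-1}{i}\x\gL^{k-2i}\vf,
\]
where any term with $k-2i<0$ is the zero form, so the stated upper limit $\lceil k/2\rceil$ contributes nothing beyond $\lfloor k/2\rfloor$.

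The main obstacle is not computational but conceptual: one must justify that the inverses $(\<1\>-\<a_i\>t)^{-1}$ and $(1-t^2)^{-n}$ genuinely exist in $\wh{W}(K)[[t]]$ (they do, since their constant terms are units) and see clearly that $\<a\>^2=\<1\>$ is precisely the ingredient that upgrades the generic identity $\sigma_t(\vf)\lambda_{-t}(\vf)=\<1\>$, which holds for any $\vf$, into the much stronger $\sigma_t(\vf)=\lambda_t(\vf)(1-t^2)^{-n}$. Once this relation is in hand, everything else is routine bookkeeping with negative binomial coefficients.
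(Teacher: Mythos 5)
Your argument is correct, but it is worth noting that the paper offers no proof of this statement at all: Proposition \ref{S3} is imported verbatim from McGarraghy \cite[Proposition 4.4]{M2}, so you have supplied a self-contained proof where the paper simply cites one. Your generating-function route --- establishing $\sigma_t(\vf)=\prod_i(\<1\>-\<a_i\>t)^{-1}$ and $\lambda_t(\vf)=\prod_i(\<1\>+\<a_i\>t)$ in $\wh{W}(K)[[t]]$ and then using $\<a_i\>^2=\<1\>$ to get $\sigma_t(\vf)=\lambda_t(\vf)(1-t^2)^{-n}$ --- is clean and correctly isolates the one field-theoretic input (squares are trivial in the Witt--Grothendieck ring) that turns the universal relation $\sigma_t(\vf)\lambda_{-t}(\vf)=\<1\>$ into the stated formula; it also explains transparently why the coefficient $\binom{n+i-1}{i}$ is exactly the coefficient in $(1-t^2)^{-n}$, something a direct multinomial count obscures. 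Two small points deserve a sentence each if you write this up: first, your identity is an equality of classes in $\wh{W}(K)$, and you should invoke Witt cancellation (valid since $\ch(K)\neq 2$) together with the dimension count $\binom{n+k-1}{k}=\sum_i\binom{n+i-1}{i}\binom{n}{k-2i}$ to upgrade it to the isometry asserted in the statement; second, your observation that the upper limit $\lceil k/2\rceil$ versus $\lfloor k/2\rfloor$ is immaterial (the extra term for odd $k$ involving $\gL^{-1}\vf$ being read as the zero form) is right and matches how the paper actually uses the formula in Propositions \ref{N2}, \ref{P11} and \ref{P12}.
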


\section{Symmetric powers of hyperbolic forms}\label{sec.hp}

For each dimension of even parity, there exists exactly one hyperbolic form up to isometry. In other words, if $\varphi$  is a hyperbolic form of dimension $2h$, then $\varphi\simeq h\times\HP$. 
In this section we compute the symmetric powers of these forms. For computational convenience,  we often do not state the specific number of hyperbolic planes. Instead we shall use the word ``Hyp" to represent the number of hyperbolic planes required in order to give full dimension to the particular symmetric power in question.

\begin{propo}\label{N1}
Let $k$ be any positive odd integer, $h\in\N$. Then
\[
S^k(h\times\HP)	=\frac{1}{2}{{2h+k-1}\choose{k}}\times\HP.
\]
\end{propo}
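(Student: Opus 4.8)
The plan is to reduce everything to the exterior-power computation of Proposition \ref{S4} by way of the decomposition in Proposition \ref{S3}. Set $\varphi = h\times\HP$, so that $\dim\varphi = n = 2h$. Applying Proposition \ref{S3} gives
\[
S^k\varphi = \lperp_{i=0}^{\lceil k/2\rceil}\binom{2h+i-1}{i}\times\Lambda^{k-2i}\varphi .
\]
The key observation is that, because $k$ is odd, every exponent $k-2i$ occurring here is odd.

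Next I would show that each summand is hyperbolic. For every index $i$ with $1\le k-2i\le 2h-1$, Proposition \ref{S4} applies and yields $\Lambda^{k-2i}\varphi\simeq\tfrac12\binom{2h}{k-2i}\times\HP$, a multiple of $\HP$. For the remaining indices the term contributes nothing: when $k-2i<0$ the exterior power is not defined and vanishes, while when $k-2i>2h-1$—which, since $k-2i$ is odd and $2h$ is even, forces $k-2i\ge 2h+1>\dim\varphi$—Definition \ref{D3} gives $\Lambda^{k-2i}\varphi=0$. Hence every summand of $S^k\varphi$ is a (possibly zero) multiple of $\HP$.

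Two routes then finish the argument. The structural route notes that a direct sum of copies of $\HP$ is itself hyperbolic, so $S^k\varphi$ is hyperbolic; since a hyperbolic form is determined up to isometry by its dimension and $\dim S^k\varphi=\binom{2h+k-1}{k}$, we obtain $S^k\varphi\simeq\tfrac12\binom{2h+k-1}{k}\times\HP$ (in particular $\binom{2h+k-1}{k}$ is forced to be even). The computational route, matching the method advertised in the abstract, substitutes the formula of Proposition \ref{S4} into the decomposition to get
\[
S^k\varphi \simeq \frac12\left(\sum_{i\ge 0}\binom{2h+i-1}{i}\binom{2h}{k-2i}\right)\times\HP ,
\]
and then establishes the convolution identity $\sum_{i\ge 0}\binom{2h+i-1}{i}\binom{2h}{k-2i}=\binom{2h+k-1}{k}$.

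I expect the only real work to lie in this identity, which I would prove by generating functions: write $\binom{2h}{k-2i}=[x^k]\,x^{2i}(1+x)^{2h}$ and use $\sum_{i\ge 0}\binom{2h+i-1}{i}x^{2i}=(1-x^2)^{-2h}$, so that the sum equals $[x^k](1+x)^{2h}(1-x^2)^{-2h}=[x^k](1-x)^{-2h}=\binom{2h+k-1}{k}$, the last factorisation being precisely where the Vandermonde-type convolution enters. The main obstacle is therefore bookkeeping rather than depth: one must check that the index range of Proposition \ref{S3} (with its $\lceil k/2\rceil$ upper limit), together with the conventions $\binom{2h}{j}=0$ for $j<0$ or $j>2h$, kills exactly the out-of-range terms, so that the finite sum coincides with the full convolution.
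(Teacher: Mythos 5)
Your proof is correct, but it takes a genuinely different route from the paper's. The paper proves the odd case directly from the splitting $h\times\HP = h\times\langle 1\rangle \perp h\times\langle -1\rangle$: it applies Proposition~\ref{S2} and Lemma~\ref{L1} to write $S^k(h\times\HP)$ as a sum of $\langle 1\rangle$'s and $\langle -1\rangle$'s with binomial multiplicities, and then observes that the $i\leftrightarrow j$ symmetry of the index set (odd $k$ forces exactly one of $i,j$ to be odd) makes the two multiplicities equal, so the form is hyperbolic and is pinned down by its dimension $\binom{2h+k-1}{k}$ --- no binomial identity is needed at all. You instead route the argument through Proposition~\ref{S3} and the exterior-power computation of Proposition~\ref{S4}; this is precisely the strategy the paper reserves for the \emph{even} case (Proposition~\ref{N2}, using Propositions~\ref{S3} and~\ref{S5} and Lemma~\ref{L2}), so your proof has the merit of treating both parities uniformly. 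Your ``structural'' finish (every summand is a multiple of $\HP$, hence the whole form is hyperbolic and determined by its dimension) is the cleanest version and, like the paper's symmetry argument, avoids proving any convolution identity; your ``computational'' finish is also sound, and your generating-function proof of $\sum_{i\ge 0}\binom{2h+i-1}{i}\binom{2h}{k-2i}=\binom{2h+k-1}{k}$ is a valid substitute for the Gould-style Vandermonde convolution the paper invokes elsewhere. Your care with the boundary terms (the $i=\lceil k/2\rceil$ term giving exponent $-1$, and exponents exceeding $\dim\varphi$) is warranted and correctly handled.
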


\begin{proof}
\begin{align*}
 S^k(h\times\HP)&=S^k(h\times\<1\>\perp h\times\<-1\>)\\
  &= \lperp_{i+j=k}S^i(h\times\<1\>)\ox S^j(h\times\<-1\>)\\
  &= \lperp_{\substack{i+j=k\\j\text{ even}}}S^i(h\times\<1\>)\ox S^j(h\times\<-1\>)  \perp
         \lperp_{\substack{i+j=k\\j\text{ odd}}}S^i(h\times\<1\>)\ox S^j(h\times\<-1\>) 
         \intertext{\hspace{10cm} [by Proposition \ref{S2}]}
  &= \lperp_{\substack{i+j=k\\j\text{ even}}}\left({{h+i-1}\choose{i}}\times\<1\>\right)\ox \left({{h+j-1}\choose{j}}\times\<1\>\right)  \perp
     \lperp_{\substack{i+j=k\\j\text{ odd}}}\left({{h+i-1}\choose{i}}\times\<1\>\right)\ox \left({{h+j-1}\choose{j}}\times\<-1\>\right) \intertext{\hspace{10cm}[by Lemma \ref{L1}]}
  &= \sum_{\substack{i+j=k\\j\text{ even}}}{{h+i-1}\choose{i}}{{h+j-1}\choose{j}}\times\<1\> \perp
        \sum_{\substack{i+j=k\\j\text{ odd}}}{{h+i-1}\choose{i}}{{h+j-1}\choose{j}}\times\<-1\> \\
  &= \sum_{\substack{i+j=k\\j\text{ even}}}{{h+i-1}\choose{i}}{{h+j-1}\choose{j}}\times\<1\> \perp
        \sum_{\substack{i+j=k\\i\text{ even}}}{{h+j-1}\choose{j}}{{h+i-1}\choose{i}}\times\<-1\>\ .
\end{align*}
From the expression above we observe that there is the same number of summands $\<1\>$ as summands $\<-1\>$.
Given that the dimension of $S^k(h\times\HP)$ is ${{2h+k-1}\choose{k}}$, we conclude that
\[
 S^k(h\times\HP)=\frac{1}{2}{{2h+k-1}\choose{k}}\times\HP.
\]
\end{proof}

We now seek to compute $S^k(h\times\HP)$ for $k$ an arbitrary even natural number. First, we need two lemmata:

\begin{lemma}\label{L2}
$$\sum_{j=0}^r{(-1)^{j}}{{2p+j-1}\choose{j}}{{p}\choose{r-j}}=(-1)^{r}{{p+r-1}\choose{r}}\ .$$
\end{lemma}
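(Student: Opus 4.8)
The plan is to recognise the left-hand side as an instance of the Vandermonde convolution, once the ``stars-and-bars'' coefficients are rewritten as binomial coefficients with negative upper index. The key tool is the elementary identity
\[
{-n \choose k} = (-1)^k {n+k-1 \choose k},
\]
valid for any non-negative integer $k$ and any $n$. This is exactly the kind of reformulation suggested by the mention of a generalised Vandermonde convolution in the introduction.

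First I would apply this identity with $n = 2p$ to the factor ${2p+j-1 \choose j}$, obtaining ${2p+j-1 \choose j} = (-1)^j {-2p \choose j}$. The sign $(-1)^j$ already present in the summand then cancels the sign so produced, since $(-1)^j(-1)^j = 1$, and each term collapses to ${-2p \choose j}{p \choose r-j}$. Hence the left-hand side equals $\sum_{j=0}^r {-2p \choose j}{p \choose r-j}$.

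Next I would invoke the Vandermonde convolution $\sum_j {a \choose j}{b \choose r-j} = {a+b \choose r}$, which holds as a polynomial identity in the upper indices $a$ and $b$ for any fixed non-negative integer $r$, and in particular remains valid with the negative value $a = -2p$. Here the summation range $0 \le j \le r$ captures every non-zero term, because ${p \choose r-j}$ vanishes for $j > r$ while ${-2p \choose j}$ vanishes for $j < 0$. Taking $a = -2p$ and $b = p$ then gives $\sum_{j=0}^r {-2p \choose j}{p \choose r-j} = {-p \choose r}$.

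Finally I would apply the same negative-index identity in the reverse direction, now with $n = p$ and $k = r$, to get ${-p \choose r} = (-1)^r {p+r-1 \choose r}$, which is precisely the right-hand side. The only points demanding care are the bookkeeping of the two sign cancellations and the justification that the Vandermonde convolution persists when the upper index is the negative integer $-2p$; the latter is exactly the ``generalised'' form of the convolution, and it is the step I would flag as the main, though fairly modest, obstacle. An induction on $r$ is available as an alternative route, but the negative-index reduction is cleaner and matches the method advertised for the rest of the paper.
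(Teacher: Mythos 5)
Your proposal is correct and follows essentially the same route as the paper: the paper likewise starts from the Vandermonde convolution $\sum_{j=0}^{r}\binom{-2p}{j}\binom{p}{r-j}=\binom{-p}{r}$ and applies the ``minus transformation'' $\binom{-q}{i}=(-1)^{i}\binom{q+i-1}{i}$ to both sides, which is exactly your rewriting read in the other direction. No gaps; the sign bookkeeping and the validity of Vandermonde for negative upper index are handled correctly.
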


\begin{proof}
By the Vandermonde convolution (see \cite{G} for example), for any integer $p$ and any positive integer $r$,
\[
 \sum_{j=0}^{r}{{-2p}\choose{j}}{{p}\choose{r-j}}={{-p}\choose{r}}\ .
\] 
By applying the ``minus transformation" ${{-q}\choose{i}}=(-1)^{i}{{q+i-1}\choose{i}}$ to both sides, we get the required identity.
\end{proof}

\begin{lemma}{\emph{[Pascal's rule]}\\}\label{L3}
For any $r\in \N$, $1\le s \le r$, we have that
\[
   {{r-1}\choose s}+{{r-1}\choose {s-1}}={r\choose s}\ .
\]
\end{lemma}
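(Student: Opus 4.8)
The statement is the classical Pascal's rule, so rather than invoking any of the preceding machinery on powers of forms I would give a direct elementary proof. The plan is to proceed algebraically by expanding all three binomial coefficients via their factorial definitions, which are legitimate since the hypotheses $r\in\N$ and $1\le s\le r$ guarantee that each of $s$, $s-1$, $r-s$ and $r-1-s$ is a nonnegative integer. Concretely, I would start from
\[
\binom{r-1}{s}+\binom{r-1}{s-1}=\frac{(r-1)!}{s!\,(r-1-s)!}+\frac{(r-1)!}{(s-1)!\,(r-s)!}.
\]
The key step is to put both fractions over the common denominator $s!\,(r-s)!$: the first summand then acquires a factor $(r-s)$ in its numerator and the second a factor $s$, so that the numerators combine as $(r-1)!\bigl[(r-s)+s\bigr]=r\,(r-1)!=r!$. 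This collapses the whole expression to $r!/\bigl(s!\,(r-s)!\bigr)=\binom{r}{s}$, which is exactly the claim.

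A purely combinatorial argument would serve equally well and is arguably cleaner. Interpreting $\binom{r}{s}$ as the number of $s$-element subsets of $\{1,\dots,r\}$, I would partition these subsets according to whether or not they contain the distinguished element $r$. Those containing $r$ are in bijection with the $(s-1)$-subsets of $\{1,\dots,r-1\}$, counted by $\binom{r-1}{s-1}$, while those avoiding $r$ are precisely the $s$-subsets of $\{1,\dots,r-1\}$, counted by $\binom{r-1}{s}$. Adding the two cases gives the identity.

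Honestly, there is no substantial obstacle to overcome here; the result is standard and the author presumably states it only because it is needed as a recurrence in the induction of the following section. The single point deserving a moment's care is the boundary behaviour of the factorial expression, but the hypothesis $1\le s\le r$ keeps every factorial argument nonnegative and every denominator nonzero, so neither the algebraic nor the combinatorial route meets a degenerate case.
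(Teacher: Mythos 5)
Your proof is correct; both the factorial computation and the subset-counting argument are standard, complete, and handle the boundary conditions properly under the hypothesis $1\le s\le r$. The paper itself offers no proof of this lemma at all---it is stated as the classical Pascal's rule and simply cited by name---so there is nothing to compare against; either of your two arguments would be an entirely adequate justification if one were demanded.
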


\begin{remark}\label{R1}
In several of the remaining proofs we use the following property of binomial coefficients, which is easy to derive:

\begin{align}
&{{r}\choose {s}} = \frac{r}{s}{{r-1}\choose{s-1}}\ . \notag 
\end{align} 
\end{remark}

\begin{propo}\label{N2}
Let $k$ be any non-negative even integer, $k=2\ell$, $h\in\N$. Then
\[
S^k(h\times\HP)=S^{2\ell}(h\times\HP)={{h+\ell-1} \choose \ell} \times \<1\> \perp \frac{1}{2}\left({{2h+2\ell-1}\choose{2\ell}}-{{h+\ell-1}\choose{\ell}}\right)\times\HP.
\]
\end{propo}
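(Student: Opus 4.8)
The plan is to mirror the reduction used for the odd case in Proposition~\ref{N1}, but because $k=2\ell$ is even the two ``halves'' of the decomposition no longer cancel by symmetry, so a genuine alternating sum must be evaluated; this is exactly where Lemma~\ref{L2} will be needed. Rather than expand via Proposition~\ref{S2} and Lemma~\ref{L1}, I would feed the even exterior powers into Proposition~\ref{S3}. Applying it with $\varphi = h\times\HP$ (so $n=2h$) and $k=2\ell$ gives
\[
S^{2\ell}(h\times\HP)=\lperp_{i=0}^{\ell}{{2h+i-1}\choose{i}}\times\gL^{2\ell-2i}(h\times\HP),
\]
in which every exterior power $\gL^{2(\ell-i)}(h\times\HP)$ has even degree, so only Proposition~\ref{S5} (and not Proposition~\ref{S4}) is needed for each summand; terms with $\ell-i>h$ vanish identically, so the formula applies throughout.

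Substituting Proposition~\ref{S5} with parameter $\ell-i$ splits each term into an \emph{anisotropic} piece ${{2h+i-1}\choose{i}}{{h}\choose{\ell-i}}\times\<(-1)^{\ell-i}\>$ and a hyperbolic piece. Collecting the anisotropic pieces, I would write their total as $A_1\times\<1\>\perp A_{-1}\times\<-1\>$, where $A_1$ (resp.\ $A_{-1}$) sums the coefficients over indices with $\ell-i$ even (resp.\ odd). The pairs $\<1\>\perp\<-1\>$ recombine into hyperbolic planes, so the net anisotropic contribution is governed entirely by the \emph{signed} total $A_1-A_{-1}=\sum_{i=0}^{\ell}(-1)^{\ell-i}{{2h+i-1}\choose{i}}{{h}\choose{\ell-i}}$.

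The crux is to show this signed sum equals ${{h+\ell-1}\choose{\ell}}$. Substituting $j=\ell-i$ and pulling out $(-1)^\ell$ recasts it as $(-1)^\ell\sum_{j=0}^{\ell}(-1)^{j}{{2h+j-1}\choose{j}}{{h}\choose{\ell-j}}$, which is precisely the left-hand side of Lemma~\ref{L2} with $p=h$ and $r=\ell$; that lemma evaluates it to $(-1)^\ell\cdot(-1)^\ell{{h+\ell-1}\choose{\ell}}={{h+\ell-1}\choose{\ell}}$. Since this is non-negative we have $A_1\ge A_{-1}$, so after recombining the matching $\<1\>$ and $\<-1\>$ summands into hyperbolic planes the anisotropic part reduces to exactly ${{h+\ell-1}\choose{\ell}}\times\<1\>$. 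I expect the bookkeeping of this index shift — casting the alternating sum into the exact shape of Lemma~\ref{L2} while keeping the $\<(-1)^{\ell-i}\>$ signs separate from the extra hyperbolic planes that Proposition~\ref{S5} also contributes — to be the main obstacle.

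Finally I would pin down the number of hyperbolic planes by a dimension count rather than by summing the hyperbolic pieces explicitly. The form $S^{2\ell}(h\times\HP)$ has dimension ${{2h+2\ell-1}\choose{2\ell}}$; subtracting the ${{h+\ell-1}\choose{\ell}}$ dimensions carried by the net $\<1\>$ part leaves ${{2h+2\ell-1}\choose{2\ell}}-{{h+\ell-1}\choose{\ell}}$ dimensions, all hyperbolic, i.e.\ $\frac{1}{2}\lb{{2h+2\ell-1}\choose{2\ell}}-{{h+\ell-1}\choose{\ell}}\rb$ copies of $\HP$. Combining this with the net anisotropic part yields the claimed isometry.
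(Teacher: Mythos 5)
Your proposal is correct and follows essentially the same route as the paper's proof: expand via Proposition~\ref{S3}, substitute Proposition~\ref{S5} for the even exterior powers, evaluate the resulting alternating sum with Lemma~\ref{L2} (with $p=h$, $r=\ell$), and fix the hyperbolic part by a dimension count. (The only quibble is that the change of variable you need is simply relabelling $i$ as $j$ after writing $(-1)^{\ell-i}=(-1)^{\ell}(-1)^{i}$, not $j=\ell-i$; the displayed sum you then match to Lemma~\ref{L2} is nevertheless the right one.)
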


\begin{proof}
Using the formula
\[
S^{k}\phi=\lperp_{i=0}^{\lceil{k/2\rceil}}{{n+i-1}\choose{i}}\times{\gL^{k-2i}\vf}\qquad \text{[Proposition \ref{S3}]}
\]
we get
\begin{align*}
S^k(h\times\HP)&=S^{2\ell}(h\times\HP)\\
&=\lperp_{i=0}^{\ell}{{2h+i-1} \choose {i}} \times \gL^{2\ell-2i}(h\times\HP)\\
&=\lperp_{i=0}^{\ell}{{2h+i-1} \choose {i}} \times \left({{h}\choose{\ell-i}}\times \<(-1)^{\ell-i}\> \perp \frac{1}{2}\left({{2h}\choose{2(\ell-i)}}-{{h}\choose{\ell-i}}\right)\times\HP\right) \intertext{\hspace{10 cm}[by Proposition \ref{S5}}
&=\lperp_{i=0}^{\ell}{{2h+i-1} \choose {i}}{{h}\choose{\ell-i}}\times \<(-1)^{\ell-i}\> \perp \hyp\\
&=\lperp_{i=0}^{\ell}{{2h+i-1} \choose {i}}{{h}\choose{\ell-i}}\times \<(-1)^{i-\ell}\> \perp \hyp\\
&=\sum_{i=0}^{\ell}(-1)^{i}{{2h+i-1} \choose {i}}{{h}\choose{\ell-i}}\times \<(-1)^{-\ell}\> \perp \hyp\\
&=(-1)^{\ell}{{h+\ell-1} \choose {\ell}}\times \<(-1)^{-\ell}\> \perp \hyp  \hspace{3cm } \text{[by Lemma \ref{L2}]}\\
&={{h+\ell-1} \choose {\ell}}\times \<1\> \perp \hyp\ .
\end{align*}
\end{proof}

\section{Symmetric powers of trace forms on symbol algebras}\label{sec.sa}

Let $n$ be an arbitrary positive integer and let $K$ contain a primitive $n$-th root of unity $\omega$. Let $a$, $b \in$ $\Kd$ and 
 let $S$ be the algebra over $K$ generated by elements $x$ 
 and $y$ where $$x^n = a, \quad y^n=b \quad\text{ and } \quad yx=\omega x y.$$ 
 We call this algebra a {\it symbol algebra} (see \cite[Chapter $1$, \S$2$]{KMRT}) and denote it as $(a, b; n, K, \omega)$. In \cite[\S11]{D}, Draxl shows it to be a central simple algebra over $K$ of degree $n$. 

\par Let $A$ be a central simple algebra of degree $n$ over a field $K$ of characteristic different from $2$. We write $T_A \colon A \rightarrow K$ for the quadratic trace form $$T_A(z)={\text {Trd}}_A(z^2) {\quad\text{for }} z \in A,$$ where Trd$_A$ is the reduced trace of $A$. 

In \cite{F}, we proved the following:
\begin{propo}\cite[Propositions 2.1, 3.1]{F}\label{P1}
For $S=(a,b;n, K, \omega)$, we have
\begin{align}
\text{(i)\quad} T_S &\simeq \<(-1)^{n/2}\> \perp \nslo\times\HP &&\text{ for } n \text{ odd.}\notag\\
\text{(ii)\quad}T_S &\simeq \<n\>\<1,a,b,(-1)^{n/2}ab\> \perp \nslfo \times \HP &&\text{ for } n \text{ even.}\notag
\end{align}
\end{propo}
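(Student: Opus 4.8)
The plan is to compute the polar bilinear form $b_T(u,v)=\Trd_S(uv)$ attached to $T_S$ directly in the standard $K$-basis $\{x^iy^j : 0\le i,j\le n-1\}$ of $S$, and to diagonalise the resulting Gram matrix. Iterating $yx=\omega xy$ gives $y^jx^k=\omega^{jk}x^ky^j$, whence $(x^iy^j)(x^ky^l)=\omega^{jk}x^{i+k}y^{j+l}$, where any exponent that reaches $n$ is reduced modulo $n$ at the cost of a factor $a=x^n$ or $b=y^n$. I note at the outset that, since $K$ contains a primitive $n$-th root of unity, $\ch(K)\nmid n$ and so $n\in\Kd$.

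The decisive point is that $\Trd_S(x^py^q)=0$ unless $p\equiv q\equiv 0\pmod n$, in which case $x^py^q$ is a central scalar $c\in\Kd$ and $\Trd_S(c)=nc$. I would verify this over a splitting field, where $x$ and $y$ may be taken to be a diagonal matrix of $n$-th roots of unity and a cyclic shift respectively; then $x^py^q$ is a weighted power of the shift, and its trace is a geometric sum of $n$-th roots of unity that vanishes unless $p\equiv q\equiv 0$. Together with the multiplication rule this shows $b_T(x^iy^j,x^ky^l)\ne 0$ only when $k\equiv -i$ and $l\equiv -j\pmod n$. Hence $b_T$ is the orthogonal sum, over the involution $(i,j)\mapsto(-i,-j)$ on index pairs, of the blocks it cuts out.

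I would then separate these blocks into two kinds. If $(i,j)\not\equiv(-i,-j)$, the corresponding two basis vectors span a plane on which each line $Kx^iy^j$ is totally isotropic, because $(x^iy^j)^2=\omega^{ij}x^{2i}y^{2j}$ has a non-central exponent and hence trace $0$; the cross pairing of the two vectors is $n$ times a nonzero scalar, so the plane is nondegenerate and therefore isometric to $\HP$. The self-paired blocks, with $2i\equiv 2j\equiv 0\pmod n$, are one-dimensional and form the anisotropic part. For $n$ odd, $\gcd(2,n)=1$ forces $(i,j)=(0,0)$, contributing the single diagonal entry $\Trd_S(1)=n$ and leaving $\nslo$ hyperbolic planes, which gives (i). For $n$ even the self-paired indices are $i,j\in\{0,n/2\}$, i.e.\ the four elements $1,\ x^{n/2},\ y^{n/2},\ x^{n/2}y^{n/2}$, whose squares are $1,a,b,\omega^{n^2/4}ab$; the four diagonal entries $n,na,nb,n\omega^{n^2/4}ab$ assemble into $\<n\>\<1,a,b,\omega^{n^2/4}ab\>$, with the remaining $\nslfo$ blocks hyperbolic.

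The step I expect to require the most care is the root-of-unity bookkeeping that turns $\omega^{n^2/4}$ into the stated sign: as $\omega$ is a primitive $n$-th root of unity, $\omega^{n/2}=-1$, so $\omega^{n^2/4}=(\omega^{n/2})^{n/2}=(-1)^{n/2}$, giving the term $(-1)^{n/2}ab$ of (ii). The other delicate points are the vanishing statement for non-central monomials, cleanest to handle through the explicit split representation above, and the verification that each cross-paired plane is genuinely $\HP$ rather than merely nondegenerate, for which the isotropic generator already exhibited suffices. A final count of the $n^2$ basis vectors then confirms the hyperbolic multiplicities $\nslo$ and $\nslfo$ in (i) and (ii).
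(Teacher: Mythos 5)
The paper offers no proof of this proposition; it is imported verbatim from \cite{F}. Your direct computation in the basis $\{x^iy^j\}$ is the natural argument and is almost entirely correct: the vanishing of $\Trd_S(x^py^q)$ for non-central monomials (checked over a splitting field with $x$ diagonal and $y$ a weighted shift), the resulting block decomposition along the involution $(i,j)\mapsto(-i,-j)$, the identification of each non-fixed block with $\HP$ (nondegenerate binary form with an isotropic vector), and the whole of part (ii), including the sign $\omega^{n^2/4}=(\omega^{n/2})^{n/2}=(-1)^{n/2}$, all go through.

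There is, however, a genuine gap in part (i). Your computation produces the diagonal part $\<n\>$, whereas the statement asserts $\<(-1)^{\frac{n-1}{2}}\>$ (the printed exponent $n/2$ is a typo for $\frac{n-1}{2}$, as the use of this proposition in Proposition \ref{P10} confirms), and you pass from one to the other with no justification. Over an arbitrary field $\<n\>$ is not isometric to $\<\pm 1\>$, so you must show $n\equiv(-1)^{\frac{n-1}{2}}\pmod{\Kdt}$, and this genuinely uses the standing hypothesis that $\omega\in K$. One way: from $1+X+\dots+X^{n-1}=\prod_{t=1}^{n-1}(X-\omega^t)$ at $X=1$ one gets $n=\prod_{t=1}^{n-1}(1-\omega^t)$; pairing the factors for $t$ and $n-t$ via $1-\omega^{n-t}=-\omega^{-t}(1-\omega^t)$ gives $n=(-1)^{\frac{n-1}{2}}\,\omega^{-N}\prod_{t=1}^{(n-1)/2}(1-\omega^t)^2$ with $N=\sum_{t=1}^{(n-1)/2}t$, and since $n$ is odd, $\omega=\bigl(\omega^{\frac{n+1}{2}}\bigr)^2$ is a square in $K$, whence $n(-1)^{\frac{n-1}{2}}\in\Kdt$. (Equivalently, one can invoke that the discriminant of $X^n-1$, namely $(-1)^{\frac{(n-1)(n-2)}{2}}n^n$, is a square in $K$ because the polynomial splits there.) With this one line added, your proof is complete; without it, the final sentence ``which gives (i)'' does not follow.
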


\begin{propo}\label{P10}
Let $n$ be odd and $k\ge 0$. Then
\[
	S^k T_S= 
	\begin{cases}
		{{\frac{n^2+k-2}{2}} \choose {\frac{k-1}{2}}} \times \<(-1)^\frac{n-1}{2}\> \perp \hyp, &\text{if $k$ is odd;}\\
		{{\frac{n^2+k-1}{2}} \choose \frac{k}{2}}\times \<1\> \perp \hyp, &\text{if $k$ is even.}
	\end{cases}
\]
\end{propo}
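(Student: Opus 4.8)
The plan is to reduce everything to the hyperbolic computations of Propositions \ref{N1} and \ref{N2}, using that for odd $n$ the form $T_S$ is hyperbolic except for a single anisotropic line. By Proposition \ref{P1}(i) one has $T_S\simeq\langle c\rangle\perp H\times\HP$, where $H=\frac{n^2-1}{2}$ and $\langle c\rangle$ is the one-dimensional part, with $c=(-1)^{(n-1)/2}\in\{\pm1\}$. First I would feed this decomposition into the additivity rule for symmetric powers (Proposition \ref{S2}), obtaining
\[
S^kT_S=\lperp_{i+j=k}S^i\langle c\rangle\ox S^j(H\times\HP).
\]
Since $\langle c\rangle$ is one-dimensional, $S^i\langle c\rangle=\langle c^i\rangle$, so each summand is $\langle c^i\rangle\ox S^j(H\times\HP)$.

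Next I would determine which summands carry an anisotropic part. For $j$ odd, Proposition \ref{N1} makes $S^j(H\times\HP)$ a multiple of $\HP$, and tensoring by the unit form $\langle c^i\rangle$ keeps it hyperbolic since $\langle c^i\rangle\ox\HP\simeq\HP$. Thus only even $j=2m$ matter, and there Proposition \ref{N2} contributes a $\langle 1\rangle$-part of multiplicity $\binom{H+m-1}{m}$, the rest being hyperbolic; tensoring by $\langle c^i\rangle$ turns this into $\binom{H+m-1}{m}\times\langle c^i\rangle$. Because $j$ is even, $i=k-j$ has the same parity as $k$, so $c^i$ is the same for all these terms: it equals $c=(-1)^{(n-1)/2}$ when $k$ is odd and $1$ when $k$ is even. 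The anisotropic lines therefore all point the same way and simply accumulate.

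Collecting them reduces the statement to one binomial sum: $\sum_m\binom{H+m-1}{m}$ with $m$ running from $0$ to $(k-1)/2$ for $k$ odd and to $k/2$ for $k$ even. This is where the generalised Vandermonde convolution comes in, via the identity $\sum_{m=0}^{M}\binom{H+m-1}{m}=\binom{H+M}{M}$, which collapses the sum to multiplicity $\binom{\frac{n^2+k-2}{2}}{\frac{k-1}{2}}$ in the odd case and $\binom{\frac{n^2+k-1}{2}}{\frac{k}{2}}$ in the even case, exactly as claimed. Every other summand being a sum of hyperbolic planes, the complement of the anisotropic part is hyperbolic, and its multiplicity is fixed by the known total dimension $\binom{n^2+k-1}{k}$ of $S^kT_S$; following the convention of the section I would simply record it as $\hyp$.

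The hard part will be the combinatorial bookkeeping rather than any structural step: one must isolate the even-$j$ contributions cleanly, check that the surviving sign $c^i$ really is independent of $j$ (so that the anisotropic lines accumulate rather than partially cancel), and align the summation range with the parity of $k$ before applying the identity. Pinning down the Vandermonde-type identity $\sum_{m=0}^{M}\binom{H+m-1}{m}=\binom{H+M}{M}$ in exactly this shape is the single genuinely non-formal ingredient.
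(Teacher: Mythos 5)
Your proposal is correct and follows essentially the same route as the paper's own proof: decompose $T_S$ via Proposition \ref{P1}(i), apply Proposition \ref{S2}, discard the odd-$j$ terms as hyperbolic by Proposition \ref{N1}, extract the $\<1\>$-part of the even-$j$ terms via Proposition \ref{N2}, and collapse the resulting sum $\sum_{m=0}^{M}\binom{H+m-1}{m}=\binom{H+M}{M}$ (which the paper obtains by repeated use of Pascal's rule, Lemma \ref{L3}, rather than by citing Vandermonde). The only cosmetic difference is that you justify the final binomial identity by Vandermonde convolution instead of iterated Pascal; both are fine.
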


\begin{proof}
Let $k$ be odd. Then
\begin{align*}
S^k{T_S}&=S^k\left(\<(-1)^{\frac{n-1}{2}}\>\perp \frac{n^2-1}{2}\times\HP\right) \hspace{4.4cm } \text{[by Proposition \ref{P1}]}\\
&=\lperp_{i+j=k}S^{i}\left(\<(-1)^{\frac{n-1}{2}}\>\right)\ox S^j\left(\frac{n^2-1}{2}\times\HP\right)\hspace{3cm } \text{[by Proposition \ref{S2}]}\\
&=\lperp_{j \text{ even}}S^{i}\left(\<(-1)^{\frac{n-1}{2}}\>\right)\ox S^j\left(\frac{n^2-1}{2}\times\HP\right) \perp
\lperp_{j \text{ odd}}S^{i}\left(\<(-1)^{\frac{n-1}{2}}\>\right)\ox S^j\left(\frac{n^2-1}{2}\times\HP\right)\\
&=\lperp_{j \text{ even}}S^{i}\left(\<(-1)^{\frac{n-1}{2}}\>\right)\ox S^j\left(\frac{n^2-1}{2}\times\HP\right) \perp
\hyp\hspace{1.8cm } \text{[by Proposition \ref{N1}]}\\
&=\lperp_{j \text{ even}}\left(\<(-1)^{\frac{(n-1)i}{2}}\>\ox S^j\left(\frac{n^2-1}{2}\times\HP\right)\right) \perp
\hyp\hspace{2cm } \text{[by Lemma \ref{L1}]}\\
&=\lperp_{j \text{ even}}\left(\<(-1)^{\frac{(n-1)}{2}}\>\ox \left({{\frac{n^2-1+j-2}{2}}\choose{\frac{j}{2}}}\times\<1\>\perp
\left({{n^2-1+j-2}\choose{j}}-{{\frac{n^2-1+j-2}{2}}\choose{\frac{j}{2}}}\right)\times\HP\right)\right) \perp
\hyp \intertext{\hspace{10 cm}[by Proposition \ref{N2}]}
&=\sum_{j \text{ even}}{{\frac{n^2+j-3}{2}}\choose{\frac{j}{2}}}\times\<(-1)^{\frac{n-1}{2}}\>
\perp \hyp\\
&=\sum_{j'=0}^{\frac{k-1}{2}}{{\frac{n^2-1}{2}+j'-1}\choose{j'}}\times\<(-1)^{\frac{n-1}{2}}\>
\perp \hyp \hspace{2.3cm } \text{[changing the index, $j'=j/2$]}\\
&={{\frac{n^2+k-2}{2}}\choose{\frac{k-1}{2}}}\times\<(-1)^{\frac{n-1}{2}}\>
\perp \hyp\ . \hspace{3.8cm } \text{[by repeated use of Lemma \ref{L3}]}
\end{align*}
Now let $k$ be even. Then
\begin{align*}
S^k{T_S}&=S^k\left(\<(-1)^{\frac{n-1}{2}}\>\perp \frac{n^2-1}{2}\times\HP\right) \hspace{4.4cm } \text{[by Proposition \ref{P1}]}\\
&=\lperp_{i+j=k}S^{i}\left(\<(-1)^{\frac{n-1}{2}}\>\right)\ox S^j\left(\frac{n^2-1}{2}\times\HP\right)\hspace{3cm } \text{[by Proposition \ref{S2}]}\\
&=\lperp_{j \text{ even}}S^{i}\left(\<(-1)^{\frac{n-1}{2}}\>\right)\ox S^j\left(\frac{n^2-1}{2}\times\HP\right) \perp
\lperp_{j \text{ odd}}S^{i}\left(\<(-1)^{\frac{n-1}{2}}\>\right)\ox S^j\left(\frac{n^2-1}{2}\times\HP\right)\\
&=\lperp_{j \text{ even}}S^{i}\left(\<(-1)^{\frac{n-1}{2}}\>\right)\ox S^j\left(\frac{n^2-1}{2}\times\HP\right) \perp
\hyp\hspace{1.8cm } \text{[by Proposition \ref{N1}]}\\
&=\lperp_{j \text{ even}}\left(\<(-1)^{\frac{(n-1)i}{2}}\>\ox S^j\left(\frac{n^2-1}{2}\times\HP\right)\right) \perp
\hyp\hspace{2cm } \text{[by Lemma \ref{L1}]}\\
&=\lperp_{j \text{ even}}\left(\<1\>\ox \left({{\frac{n^2-1+j-2}{2}}\choose{\frac{j}{2}}}\times\<1\>\perp
\left({{n^2-1+j-2}\choose{j}}-{{\frac{n^2-1+j-2}{2}}\choose{\frac{j}{2}}}\right)\times\HP\right)\right) \perp
\hyp \intertext{\hspace{10 cm}[by Proposition \ref{N2}]}
&=\sum_{j \text{ even}}{{\frac{n^2+j-3}{2}}\choose{\frac{j}{2}}}\times\<1\>
\perp \hyp\\
&=\sum_{j'=0}^{\frac{k}{2}}{{\frac{n^2-1}{2}+j'-1}\choose{j'}}\times\<1\>
\perp \hyp \hspace{2.3cm } \text{[changing the index, $j'=j/2$]}\\
&={{\frac{n^2+k-1}{2}}\choose{\frac{k}{2}}}\times\<1\>
\perp \hyp\ . \hspace{3.8cm } \text{[by repeated use of Lemma \ref{L3}]}
\end{align*}
\end{proof}

\begin{propo}\label{P11}
Let $n$ be even, $k$ odd. We write $T_S \simeq q_S \perp m$ $\times$ $\HP$ where $q_S \simeq \<n\>\<1, a, b, (-1)^\frac{n}{2}ab\>$ and $m=\nslfo$. Then for $k>1$,
\[
	S^k T_S = 
		\begin{cases}
		\frac{2m+2k+4}{k-1} {{(2m+k+3)/2}\choose{(k-3)/2}}\times q_S
                     \perp \hyp, &\text{if $n\equiv 0\pmod{4}$;}\\
		\frac{2m+6}{k-1} {{(2m+k+3)/2}\choose{(k-3)/2}}\times q_S
                     \perp\hyp, &\text{if $n\equiv 2\pmod{4}$.}
	     \end{cases}
\]
\end{propo}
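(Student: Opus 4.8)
The plan is to follow the template of the proof of Proposition~\ref{P10}, the only genuinely new ingredient being the computation of the odd symmetric powers of the four-dimensional anisotropic kernel $q_S$. First I would use the decomposition $T_S\simeq q_S\perp m\times\HP$ (Proposition~\ref{P1}) and expand by Proposition~\ref{S2}:
\[
S^kT_S=\lperp_{i+j=k}S^iq_S\ox S^j(m\times\HP).
\]
Since $k$ is odd, exactly one of $i,j$ is odd in each summand. When $j$ is odd, $S^j(m\times\HP)$ is hyperbolic by Proposition~\ref{N1}, so the whole summand is hyperbolic and is absorbed into $\hyp$. When $j=2\ell$ is even (forcing $i=k-2\ell$ odd), Proposition~\ref{N2} gives $S^{2\ell}(m\times\HP)=\binom{m+\ell-1}{\ell}\times\<1\>\perp\hyp$, so modulo $\hyp$ the summand equals $\binom{m+\ell-1}{\ell}\times S^iq_S$. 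Hence the anisotropic part of $S^kT_S$ is that of $\lperp_{\ell=0}^{(k-1)/2}\binom{m+\ell-1}{\ell}\times S^{k-2\ell}q_S$, and everything reduces to understanding $S^iq_S$ for $i$ odd.

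Next I would compute $S^iq_S$ for odd $i$ by applying Proposition~\ref{S3} to the four-dimensional form $q_S$. Because $\dim q_S=4$ the exterior power $\gL^rq_S$ vanishes for $r>4$, and since the surviving exponents $i-2t$ are odd, only $\gL^1q_S=q_S$ and $\gL^3q_S$ contribute, yielding
\[
S^iq_S=\binom{(i+5)/2}{3}\times q_S\perp\binom{(i+3)/2}{3}\times\gL^3q_S.
\]
The key step is to identify $\gL^3q_S$. Using the standard exterior-power duality $\gL^{3}\varphi\simeq\<\det\varphi\>\ox\varphi$ for a four-dimensional form $\varphi$, together with $\det q_S=(-1)^{n/2}$ as a square class (since $q_S\simeq\<n,na,nb,n(-1)^{n/2}ab\>$ has determinant $n^4a^2b^2(-1)^{n/2}$), I obtain $\gL^3q_S\simeq\<(-1)^{n/2}\>q_S$. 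This is exactly where the two congruence classes of $n$ separate: if $n\equiv0\pmod4$ then $\gL^3q_S\simeq q_S$ and the two coefficients add, whereas if $n\equiv2\pmod4$ then $\gL^3q_S\simeq\<-1\>q_S$, and since $q_S\perp\<-1\>q_S=q_S\ox\HP$ is hyperbolic the matching copies cancel into $\hyp$ and the two coefficients subtract. In either case the coefficient of $q_S$ in $S^iq_S$ modulo $\hyp$ is $\binom{(i+5)/2}{3}\pm\binom{(i+3)/2}{3}$, with $+$ for $n\equiv0$ and $-$ for $n\equiv2$.

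Finally I would perform the summation. Substituting $i=k-2\ell$, the total coefficient of $q_S$ becomes
\[
C=\sum_{\ell=0}^{(k-1)/2}\binom{m+\ell-1}{\ell}\left[\binom{(k+5)/2-\ell}{3}\pm\binom{(k+3)/2-\ell}{3}\right].
\]
Each inner sum $\sum_\ell\binom{m+\ell-1}{\ell}\binom{N-\ell}{3}$ I would evaluate by writing $\binom{N-\ell}{3}=\binom{N-\ell}{(N-3)-\ell}$, applying the minus transformation to both factors, and then invoking the Vandermonde convolution precisely as in Lemma~\ref{L2}; this gives $\binom{m+N}{N-3}$. Taking $N=(k+5)/2$ and $N=(k+3)/2$ produces
\[
C=\binom{(2m+k+5)/2}{(k-1)/2}\pm\binom{(2m+k+3)/2}{(k-3)/2}.
\]
A last simplification via Pascal's rule (Lemma~\ref{L3}) and the identity $\binom{r}{s}=\frac{r}{s}\binom{r-1}{s-1}$ from Remark~\ref{R1} rewrites this as $\frac{2m+2k+4}{k-1}\binom{(2m+k+3)/2}{(k-3)/2}$ for $n\equiv0$ and $\frac{2m+6}{k-1}\binom{(2m+k+3)/2}{(k-3)/2}$ for $n\equiv2$, as claimed; the hypothesis $k>1$ is what keeps the denominator $k-1$ nonzero.

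I expect the main obstacle to be the step relating $\gL^3q_S$ to $q_S$: establishing the determinant duality, computing the square class of $\det q_S$, and correctly tracking the hyperbolic absorption in the case $n\equiv2\pmod4$, where the naive additive coefficient is replaced by a difference. Once that sign behaviour is settled, the rest is the same Vandermonde-and-Pascal bookkeeping already used in Propositions~\ref{N2} and~\ref{P10}.
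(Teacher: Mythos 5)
Your proposal is correct and follows essentially the same route as the paper: decompose via Proposition~\ref{S2}, discard the odd-$j$ terms by Proposition~\ref{N1}, reduce the even-$j$ terms by Propositions~\ref{N2} and~\ref{S3} to the coefficients of $\gL^1 q_S=q_S$ and $\gL^3 q_S$, identify $\gL^3 q_S\simeq\<(-1)^{n/2}\>q_S$, and finish with the Vandermonde convolution and the identity of Remark~\ref{R1}. The only difference is that you explicitly justify $\gL^3 q_S\simeq\<\det q_S\>q_S$ with $\det q_S=(-1)^{n/2}$ modulo squares, a step the paper simply asserts.
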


\begin{proof}
\begin{align*}
S^k{T_S}&=S^k\left(q_S \perp m\times\HP\right)\\
&=\lperp_{i+j=k}S^{i}q_S\ox S^j\left(m\times\HP\right)\hspace{3cm } \text{[by Proposition \ref{S2}]}\\
&=\lperp_{j \text{ even}}S^{i}q_S\ox S^j\left(m\times\HP\right) \perp
\lperp_{j \text{ odd}}S^{i}q_S\ox S^j\left(m\times\HP\right)\\
&=\lperp_{j \text{ even}}S^{i}q_S\ox S^j\left(m\times\HP\right) \perp
\hyp\hspace{1.8cm } \text{[by Proposition \ref{N1}]}\\
&=\lperp_{j \text{ even}}S^{i}q_S\ox \left({{m+\frac{j}{2}-1}\choose{\frac{j}{2}}}\times\<1\>
\perp \frac{1}{2}\left({{2m+j-1}\choose{j}}-{{m+\frac{j}{2}-1}\choose{\frac{j}{2}}}\right)\times \HP \right) \perp
\hyp\intertext{\hspace{8.3cm}[by Proposition \ref{N2}]}
&=\lperp_{j \text{ even}}S^{i}q_S\ox \left({{m+\frac{j}{2}-1}\choose{\frac{j}{2}}}\times\<1\>\right) 
\perp \hyp\\
&=\sum_{j \text{ even}}{{m+\frac{j}{2}-1}\choose{\frac{j}{2}}}\times S^i q_S 
\perp \hyp\\
&=\sum_{j \text{ even}}{{m+\frac{j}{2}-1}\choose{\frac{j}{2}}}\times 
\left(\lperp_{\ell=0}^{(i+1)/2}{{3+\ell}\choose{\ell}}\times \gL^{i-2\ell}q_S\right) 
\perp\hyp \hspace{1cm } \text{[by Proposition \ref{S3}]}\\
&=\sum_{j \text{ even}}{{m+\frac{j}{2}-1}\choose{\frac{j}{2}}}\times 
\left({{3+\frac{i-1}{2}}\choose{\frac{i-1}{2}}}\times \gL^{1}q_S \perp  {{3+\frac{i-3}{2}}\choose{\frac{i-3}{2}}}\times \gL^{3}q_S   \right) \perp\hyp \hspace{1mm} \text{[since for larger $k$, $\gL^k$ is zero]} \\
&=\sum_{j \text{ even}}{{m+\frac{j}{2}-1}\choose{\frac{j}{2}}}\times 
\left({{3+\frac{i-1}{2}}\choose{\frac{i-1}{2}}}\times q_S 
\perp  {{3+\frac{i-3}{2}}\choose{\frac{i-3}{2}}}\times \left(\<(-1)^{n/2}\> \ox q_S\right)   \right) \perp\hyp \\
&=\sum_{j \text{ even}}{{m+\frac{j}{2}-1}\choose{\frac{j}{2}}}
{{3+\frac{i-1}{2}}\choose{\frac{i-1}{2}}}\times q_S 
\perp \sum_{j \text{ even}}{{m+\frac{j}{2}-1}\choose{\frac{j}{2}}}
{{3+\frac{i-3}{2}}\choose{\frac{i-3}{2}}}\times \left(\<(-1)^{n/2}\>\ox q_S\right)  
\perp\hyp \\
&=\sum_{j \text{ even}}{{m+\frac{j}{2}-1}\choose{\frac{j}{2}}}
{{3+\frac{k-1}{2}-\frac{j}{2}}\choose{\frac{k-1}{2}-\frac{j}{2}}}\times q_S 
\perp \sum_{j \text{ even}}{{m+\frac{j}{2}-1}\choose{\frac{j}{2}}}
{{3+\frac{k-3}{2}-\frac{j}{2}}\choose{\frac{k-3}{2}-\frac{j}{2}}}\times \left(\<(-1)^{n/2}\>\ox q_S\right)  
\perp\hyp \\
&=\sum_{j'=0}^{\frac{k-1}{2}}{{m+j'-1}\choose{j'}}
{{3+\frac{k-1}{2}-j'}\choose{\frac{k-1}{2}-j'}}\times q_S 
\perp \sum_{j \text{ even}}{{m+j'-1}\choose{j'}}
{{3+\frac{k-3}{2}-j'}\choose{\frac{k-3}{2}-j'}}\times \left(\<(-1)^{n/2}\>\ox q_S\right)  
\perp\hyp  \intertext{\hspace{9cm }[changing the index, $j'=j/2$]}
&={{m+3+\frac{k-1}{2}}\choose{\frac{k-1}{2}}}\times q_S 
\perp 
{{m+3+\frac{k-3}{2}}\choose{\frac{k-3}{2}}}\times \left(\<(-1)^{n/2}\>\ox q_S\right)  
\perp\hyp\ . \hspace{1cm}\text{[by \cite[Equation (3)]{G}]}
\end{align*}
Now if $n\equiv 0\pmod 4$
\begin{align*}
S^k T_S &=\left({{m+3+\frac{k-1}{2}}\choose{\frac{k-1}{2}}} +
{{m+3+\frac{k-3}{2}}\choose{\frac{k-3}{2}}}\right)\times q_S
\perp\hyp\ \\  
&=\frac{2m+2k+4}{k-1}
{\frac{2m+k+3}{2}\choose{\frac{k-3}{2}}}\times q_S
\perp\hyp\ . \hspace{3cm} \text{[by Remark \ref{R1}]}
\end{align*}
On the other hand, if $n\equiv 2\pmod 4$
\begin{align*}
S^k T_S &=\left({{m+3+\frac{k-1}{2}}\choose{\frac{k-1}{2}}} -
{{m+3+\frac{k-3}{2}}\choose{\frac{k-3}{2}}}\right)\times q_S
\perp\hyp\ \\  
&=\frac{2m+6}{k-1}
{\frac{2m+k+3}{2}\choose{\frac{k-3}{2}}}\times q_S
\perp\hyp\ . \hspace{5cm} \text{[by Remark \ref{R1}]}
\end{align*}
\end{proof}

\begin{propo}\label{P12}
Let $n$ and $k$ be even. We write $T_S \simeq q_S \perp m$ $\times$ $\HP$ where $q_S \simeq \<n\>\<1, a, b, (-1)^\frac{n}{2}ab\>$ and $m=\nslfo$. Then for $k>1$,
\[
	S^k T_S = 
		\begin{cases}
		\left(1+\frac{(m+3+k/2)(m+2+k/2)}{(k/2)(k/2-1)}\right){{m+1+k/2}\choose{k/2-2}}\times \<1\> 
                     \perp \hyp, &\text{if $n\equiv 0\pmod{4}$;}\\
		\left(\frac{(m+3+k/2)(m+2+k/2)}{(k/2)(k/2-1)}-1\right){{m+1+k/2}\choose{k/2-2}}\times \<1\>
                     \perp\hyp, &\text{if $n\equiv 2\pmod{4}$.}
	     \end{cases}
\]
\end{propo}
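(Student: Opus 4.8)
The plan is to follow the template of the proof of Proposition \ref{P11} as closely as possible, the only structural change being the parity of $k$. Writing $T_S \simeq q_S \perp m\times\HP$, I would first apply Proposition \ref{S2} to expand $S^k T_S=\lperp_{i+j=k}S^i q_S\ox S^j(m\times\HP)$, split the sum according to the parity of $j$, and discard the odd-$j$ terms: by Proposition \ref{N1} each $S^j(m\times\HP)$ with $j$ odd is a multiple of $\HP$, so the tensor product collapses into $\hyp$. For even $j$, Proposition \ref{N2} writes $S^j(m\times\HP)$ as $\binom{m+j/2-1}{j/2}\times\<1\>$ plus a hyperbolic summand, and once more only the $\<1\>$-part survives tensoring, the rest going into $\hyp$. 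This reduces the task to evaluating
\[
S^k T_S=\sum_{j\text{ even}}\binom{m+\tfrac{j}{2}-1}{\tfrac{j}{2}}\times S^i q_S\perp\hyp,\qquad i=k-j.
\]

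Since $k$ and $j$ are even, every surviving $i$ is even, so when I expand $S^i q_S$ by Proposition \ref{S3} (applied to $q_S$, with the role of the dimension there played by $\dim q_S=4$) only the even exterior powers $\gL^0 q_S$, $\gL^2 q_S$, $\gL^4 q_S$ occur, with coefficients $\binom{3+i/2}{i/2}$, $\binom{3+(i-2)/2}{(i-2)/2}$ and $\binom{3+(i-4)/2}{(i-4)/2}$. Here $\gL^0 q_S=\<1\>$ and $\gL^4 q_S=\<\det q_S\>=\<(-1)^{n/2}\>$, the exact even-degree analogues of the $\gL^1,\gL^3$ terms that produced $q_S$ and $\<(-1)^{n/2}\>\ox q_S$ in Proposition \ref{P11}. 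The genuinely new ingredient is $\gL^2 q_S$: diagonalising $q_S\simeq\<n,na,nb,n(-1)^{n/2}ab\>$ and taking pairwise products, I would compute $\gL^2 q_S\simeq\<1,(-1)^{n/2}\>\ox\<a,b,ab\>$.

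The remaining step is the summation. After substituting $j'=j/2$ and collecting the $\<1\>$- and $\<(-1)^{n/2}\>$-contributions, each multiplicity is a Vandermonde-type convolution which I would evaluate by \cite[Equation (3)]{G} exactly as in Proposition \ref{P11}, obtaining $\binom{m+3+k/2}{k/2}$ from $\gL^0$ and $\binom{m+1+k/2}{k/2-2}$ from $\gL^4$. The two cases then differ only in how these combine. When $n\equiv0\pmod 4$ we have $\<(-1)^{n/2}\>=\<1\>$, so the two counts add, and Remark \ref{R1} rewrites $\binom{m+3+k/2}{k/2}=\frac{(m+3+k/2)(m+2+k/2)}{(k/2)(k/2-1)}\binom{m+1+k/2}{k/2-2}$, yielding the stated coefficient $\left(1+\frac{(m+3+k/2)(m+2+k/2)}{(k/2)(k/2-1)}\right)$. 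When $n\equiv2\pmod 4$ we have $\<(-1)^{n/2}\>=\<-1\>$, so pairing $\<1\>$ with $\<-1\>$ produces $\HP$ and only the difference survives, giving $\left(\frac{(m+3+k/2)(m+2+k/2)}{(k/2)(k/2-1)}-1\right)$.

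The step I expect to be the main obstacle is the control of $\gL^2 q_S$, since it does not reduce to a scalar multiple of $\<1\>$ the way $\gL^0$ and $\gL^4$ do. When $n\equiv2\pmod 4$ this is harmless: $\<1,(-1)^{n/2}\>=\<1,-1\>=\HP$, so $\gL^2 q_S\simeq 3\times\HP$ is hyperbolic and its whole contribution is legitimately absorbed into $\hyp$, leaving the clean form above. When $n\equiv0\pmod 4$, however, $\gL^2 q_S\simeq 2\times\<a,b,ab\>$, which is not hyperbolic for general $a,b$, while its multiplicity in the expansion works out to the nonzero $\binom{m+k/2+2}{k/2-1}$. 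The crux of the argument is therefore to settle precisely how this $\gL^2 q_S$ term is to be accounted for in the $n\equiv0\pmod 4$ case; this is the one point at which the even-$k$ computation departs from the odd-$k$ analysis of Proposition \ref{P11}, and it is where I would concentrate the verification.
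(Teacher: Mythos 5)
Your proposal tracks the paper's own computation step for step --- the decomposition via Proposition \ref{S2}, the discarding of the odd-$j$ terms via Proposition \ref{N1}, the reduction of $S^j(m\times\HP)$ via Proposition \ref{N2}, the expansion of $S^iq_S$ into $\gL^0q_S$, $\gL^2q_S$, $\gL^4q_S$ via Proposition \ref{S3}, the identifications $\gL^2 q_S\simeq \<a,b,ab\>\ox\<1,(-1)^{n/2}\>$ and $\gL^4q_S\simeq\<(-1)^{n/2}\>$, and the Vandermonde evaluation of the multiplicities --- and all of those steps are correct. But you stop exactly at the point you yourself flag as the crux, and that is a genuine gap: in the case $n\equiv 0\pmod 4$ you are left with a summand $2\binom{m+2+k/2}{k/2-1}\times\<a,b,ab\>$ which, as you rightly note, is not hyperbolic over an arbitrary field, yet the stated result requires it to be absorbed into $\hyp$.

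The missing idea is the standing hypothesis of Section \ref{sec.sa}: $K$ contains a primitive $n$-th root of unity $\omega$. If $n\equiv 0\pmod 4$ then $\omega^{n/4}$ is a primitive fourth root of unity, so $\sqrt{-1}\in K$ and hence $-1\in K^{\times 2}$. Consequently $\<c,c\>\simeq\<c,-c\>\simeq\HP$ for every $c\in K^{\times}$, so $2\times\psi$ is hyperbolic for \emph{every} form $\psi$; in particular $2\times\<a,b,ab\>\simeq 3\times\HP$, and the troublesome term really does belong to $\hyp$. (The paper absorbs this term silently, so the point deserves to be made explicit.) With that one observation added, your argument is complete and coincides with the paper's proof; the $n\equiv 2\pmod 4$ case you already handle correctly, since there $\<1,(-1)^{n/2}\>\simeq\HP$ kills the $\gL^2$ contribution outright.
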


\begin{proof}
\begin{align*}
S^k{T_S}&=S^k\left(q_S \perp m\times\HP\right)\\
&=\lperp_{i+j=k}S^{i}q_S\ox S^j\left(m\times\HP\right)\hspace{3cm } \text{[by Proposition \ref{S2}]}\\
&=\lperp_{j \text{ even}}S^{i}q_S\ox S^j\left(m\times\HP\right) \perp
\lperp_{j \text{ odd}}S^{i}q_S\ox S^j\left(m\times\HP\right)\\
&=\lperp_{j \text{ even}}S^{i}q_S\ox S^j\left(m\times\HP\right) \perp
\hyp\hspace{1.8cm } \text{[by Proposition \ref{N1}]}\\
&=\lperp_{j \text{ even}}S^{i}q_S\ox \left({{m+\frac{j}{2}-1}\choose{\frac{j}{2}}}\times\<1\>
\perp \frac{1}{2}\left({{2m+j-1}\choose{j}}-{{m+\frac{j}{2}-1}\choose{\frac{j}{2}}}\right)\times \HP \right) \perp
\hyp\intertext{\hspace{8.3cm}[by Proposition \ref{N2}]}
&=\lperp_{j \text{ even}}S^{i}q_S\ox \left({{m+\frac{j}{2}-1}\choose{\frac{j}{2}}}\times\<1\>\right) 
\perp \hyp\\
&=\sum_{j \text{ even}}{{m+\frac{j}{2}-1}\choose{\frac{j}{2}}}\times S^i q_S 
\perp \hyp \\
&=\sum_{j \text{ even}}{{m+\frac{j}{2}-1}\choose{\frac{j}{2}}}\times 
\left(\lperp_{\ell=0}^{i/2}{{3+\ell}\choose{\ell}}\times \gL^{i-2\ell}q_S\right) 
\perp\hyp \hspace{1cm } \text{[by Proposition \ref{S3}]}\\
&=\sum_{j \text{ even}}{{m+\frac{j}{2}-1}\choose{\frac{j}{2}}}\times 
\left({{3+\frac{i}{2}}\choose{\frac{i}{2}}}\times \gL^{0}q_S 
\perp{{3+\frac{i-2}{2}}\choose{\frac{i-2}{2}}}\times \gL^{2}q_S 
\perp {{3+\frac{i-4}{2}}\choose{\frac{i-4}{2}}}\times \gL^{4}q_S 
  \right) 
\perp\hyp \hspace{1mm} \intertext{\hspace{8cm}[since for larger $k$, $\gL^k$ is zero]}
&=\sum_{j \text{ even}}{{m+\frac{j}{2}-1}\choose{\frac{j}{2}}}\times 
\left({{3+\frac{i}{2}}\choose{\frac{i}{2}}}\times \<1\> 
\perp{{3+\frac{i-2}{2}}\choose{\frac{i-2}{2}}}\times \<a,b,ab\>\ox\<1,(-1)^{n/2}\> 
\perp {{3+\frac{i-4}{2}}\choose{\frac{i-4}{2}}}\times \<(-1)^{n/2}\>
  \right) \\
  &\qquad\perp \hyp \\ 
&=\sum_{j \text{ even}}{{m+\frac{j}{2}-1}\choose{\frac{j}{2}}}
{{3+\frac{k-j}{2}}\choose{\frac{k-j}{2}}}\times \<1\> 
\perp \sum_{j \text{ even}}{{m+\frac{j}{2}-1}\choose{\frac{j}{2}}}{{3+\frac{k-j-2}{2}}\choose{\frac{k-j-2}{2}}}\times \<a,b,ab\>\ox\<1,(-1)^{n/2}\> \\
  &\qquad\perp \sum_{j \text{ even}}{{m+\frac{j}{2}-1}\choose{\frac{j}{2}}}{{3+\frac{k-j-4}{2}}\choose{\frac{k-j-4}{2}}}\times \<(-1)^{n/2}\>
  \perp \hyp \\
  &=\sum_{j=0}^{k/2}{{m+j'-1}\choose{j'}}
{{3+k/2-j'}\choose{k/2-j'}}\times \<1\> 
\perp \sum_{j'=0}^{k/2-1}{{m+j'-1}\choose{j'}}{{3+k/2-1-j'}\choose{k/2-1-j'}}\times \<a,b,ab\>\ox\<1,(-1)^{n/2}\> \\
  &\qquad\perp \sum_{j'=0}^{k/2-2}{{m+j'-1}\choose{j'}}{{3+k/2-2-j'}\choose{k/2-2-j'}}\times \<(-1)^{n/2}\>
  \perp \hyp \intertext{\hspace{9cm }[changing the index, $j'=j/2$]}
  &={{m+3+k/2}\choose{k/2}}\times \<1\> 
\perp {{m+2+k/2}\choose{k/2-1}}\times \<a,b,ab\>\ox\<1,(-1)^{n/2}\> \\
  &\qquad\perp {{m+1+k/2}\choose{k/2-2}}\times \<(-1)^{n/2}\>
  \perp \hyp\ . \hspace{1cm}\text{[by \cite[Equation (3)]{G}]}
\end{align*}
We now examine separately the cases of $n\equiv 0 \pmod 4$ and $n\equiv 2 \pmod 4$. 
Suppose $n\equiv 0 \pmod 4$. In this case we have $(-1)^{n/2}=1$ and so
\begin{align*}
 S^k{T_S}
  &=\left({{m+3+k/2}\choose{k/2}} + {{m+1+k/2}\choose{k/2-2}}\right)\times \<1\> 
\perp 2{{m+2+k/2}\choose{k/2-1}}\times \<a,b,ab\> \perp \hyp\\
  &=\left(1+\frac{(m+3+k/2)(m+2+k/2)}{(k/2)(k/2-1)}\right){{m+1+k/2}\choose{k/2-2}}\times \<1\> 
\perp 2{{m+2+k/2}\choose{k/2-1}}\times \<a,b,ab\> \perp \hyp\\ 
  &=\left(1+\frac{(m+3+k/2)(m+2+k/2)}{(k/2)(k/2-1)}\right){{m+1+k/2}\choose{k/2-2}}\times \<1\> 
\perp \hyp\ .
\end{align*} 

Now suppose $n\equiv 2 \pmod 4$. In this case we have $(-1)^{n/2}=-1$ and so
\begin{align*}
 S^k{T_S}
  &={{m+3+k/2}\choose{k/2}} \times \<1\> 
\perp {{m+1+k/2}\choose{k/2-2}}\times \<-1\> \perp \hyp \\
  &=\left({{m+3+k/2}\choose{k/2}} -
{{m+1+k/2}\choose{k/2-2}}\right)\times \<1\> \perp \hyp \\
  &=\left(\frac{(m+3+k/2)(m+2+k/2)}{(k/2)(k/2-1)}-1\right){{m+1+k/2}\choose{k/2-2}}\times \<1\> \perp \hyp\ .
\end{align*} 

\end{proof}

\section{Summary of results}

We give here a summary of the above results on symmetric powers alongside the corresponding results on exterior powers computed in \cite[Section 5]{F}.

Let $\vf\simeq h\times \HP$ for arbitrary $h\in \N$. Then

\begin{table}[h]
  \begin{center}
    \begin{tabular}{ | c || c | c |}
       \hline
       \bf{$k$} & \bf{$\gL^k \vf$}  & \bf{$S^k \vf$} \\ \hline \hline
       odd       & $\hyp$   & $\hyp$    \\ \hline
       even      & $\displaystyle{h\choose\frac{k}{2}}\times \<(-1)^\frac{k}{2}\>\perp\hyp$    & $\displaystyle{h+\frac{k}{2}-1\choose\frac{k}{2}}\times \<1\>\perp\hyp$   \\ \hline

    \end{tabular}
  \end{center}
\end{table}

Let $S=(a,b;n,K,\omega)$ be a symbol algebra of degree $n$ as described in Section \ref{sec.sa}. Let $\vf= T_S= \<n\>\<1, a, b, (-1)^\frac{n}{2}ab\>$ and $m=\nslfo$. Then

\begin{table}[h]
  \begin{center}
    \begin{tabular}{ | c | c || c | c |}
       \hline
       \bf{$n$} & \bf{$k$} & \bf{$\gL^k \vf$}  & \bf{$S^k \vf$} \\ \hline \hline
       odd & odd & ${\nslo \choose \frac{k-1}{2}} \times \<(-1)^\frac{k-1}{2}\> \perp \hyp$   & ${{\frac{n^2+k-2}{2}} \choose {\frac{k-1}{2}}} \times \<(-1)^\frac{n-1}{2}\> \perp \hyp$    \\ \hline
       odd & even & ${\nslo \choose \frac{k}{2}}\times \<(-1)^\frac{k}{2}\> \perp \hyp$   & ${{\frac{n^2+k-1}{2}} \choose \frac{k}{2}}\times \<1\> \perp \hyp$    \\ \hline
       $0\pmod 4$ & odd & ${\dbinom {m+1} {\frac{k-1}{2}}} \times \<(-1)^\frac{n(k-1)}{4}\> q_S \perp \hyp$    
       		& $\frac{2m+2k+4}{k-1} {{\frac{2m+k+3}{2}}\choose{\frac{k-3}{2}}}\times q_S \perp\hyp$   \\ \hline
       $0\pmod 4$ & even & ${\dbinom {\frac{n^2}{2}} {\frac{k}{2}}}\times \<1\>\perp \hyp$    
       		& $\left(1+\frac{(m+3+k/2)(m+2+k/2)}{(k/2)(k/2-1)}\right){{\frac{2m+2+k}{2}}\choose{\frac{k-4}{2}}}\times \<1\> \perp\hyp$   \\ \hline
       $2\pmod 4$ & odd & ${\dbinom {m+1} {\frac{k-1}{2}}} \times \<(-1)^\frac{n(k-1)}{4}\> q_S\perp \hyp$    
       		& $\frac{2m+6}{k-1} {{\frac{2m+k+3}{2}}\choose{\frac{k-3}{2}}}\times q_S \perp\hyp$   \\ \hline
       $2\pmod 4$ & even 
         & $
       			\begin{cases}
				\lb 1-\frac{2k}{n^2}\rb{\dbinom{\frac{n^2}{2}}{\frac{k}{2}}} \times \<(-1)^\frac{k}{2}\>
				\perp \hyp \text{, $k\le \frac{n^2}{2}$\kern-1em}\\
			\lb \frac{2k}{n^2}-1\rb{\dbinom{\frac{n^2}{2}}{\frac{k}{2}}} \times \<(-1)^\frac{k+2}{2}\>
				\perp \hyp \text{, $k > \frac{n^2}{2}$\kern-1em}
			\end{cases}$   
			 & $\left(\frac{(m +3+k/2)(m+2+k/2)}{(k/2)(k/2-1)}-1\right){{\frac{2m+2+k}{2}}\choose{\frac{k-4}{2}}}\times \<1\>
			 \perp\hyp$ 
                \\ \hline
    \end{tabular}
  \end{center}
\end{table}

\newpage


\begin{thebibliography}{99}

\bibitem{B} Bourbaki, N., \emph{\'El\'ements de Math\'ematique, Livre II, Alg\`{e}bre}, Hermann, Paris, 1959.

\bibitem{D} Draxl, P. K., \emph{Skew Fields}, Cambridge Univ. Press, Cambridge, 1983.

\bibitem{F} Flatley, R., Trace Forms of Symbol Algebras, \emph{Algebra Colloq.} \textbf{19}:spec01 (2012) 1117-1124.

\bibitem{G} Gould, H. W., Some generalizations of Vandermonde's Convolution, \emph{Amer. Math. Monthly} \textbf{63}:2 (1956) 84-91.
 
\bibitem{KMRT} Knus,~M.-A., Merkurjev, A., Rost, M., Tignol,~J.-P., \emph{The Book of Involutions}, Amer. Math. Soc. Colloq. Publ., Providence, RI, 1998.

\bibitem{K} Knutson, D. ü Rings and the Representation Theory of the Symmetric Group, volume 308 of
Lecture Notes in Mathematics. Springer-Verlag, Berlin, 1973.

\bibitem{TYL} Lam, T. Y., \emph{Introduction to Quadratic Forms over Fields}, Grad. Stud. Math., Providence, RI, 2004.

\bibitem{M2} McGarraghy, S., Symmetric powers of symmetric bilinear forms, \emph{Algebra Colloq.} \textbf{12}:1 (2005) 41-57.

\bibitem{S} Scharlau, W., \emph{Quadratic and Hermitian Forms}, Springer, Berlin, 1985.

\end{thebibliography}
\end{document}